\thanks{Supported by the National Science Foundation under Grant ECCS 10-28464.}
\numberwithin{equation}{section}
\theoremstyle{plain}
\newtheorem{cond}{Condition}[section]   
\newcommand{\IZ}{\mathbb{Z}}
\newcommand{\IR}{\mathbb{R}}
\begin{document}

\section{Introduction}

The basic framework of the classical martingale calculus will be used, as described, for example, in \cite{Jacod79,Kallenberg02,Meyer76,Protter04,wonghajek85}.
All random processes are assumed to be defined on a complete probability space $(\Omega, {\cal F}, P)$ with a
filtration of $\sigma$-algebras  $( {\cal F}_t : t \geq  0)$ that is assumed to satisfy the usual conditions of  right-continuity and inclusion of all sets of
probability zero.     Semimartingales are processes that are  c\`{a}dl\`{a}g (right continuous with  finite left limits)
and can be represented as the sum of a  c\`{a}dl\`{a}g
 local martingale and a  c\`{a}dl\`{a}g, adapted process of locally finite variation.
The quadratic variation process of a semimartingale  $Y$, denoted by $[Y,Y],$  is the nondecreasing process defined by
$$
[Y,Y]_t = \lim_{n\rightarrow\infty} \sum_{i=0}^{k_n-1}  (Y_{t^n_{i+1}}-Y_{t^n_i})^2
$$
for any choice of $0=t_0^n < t_1^n< \cdots < t_{k_n}^n = t$ such that
$\max_i |t^n_{i+1}-t^n_i|\rightarrow 0$ as $n\rightarrow\infty.$   The process $[Y,Y]$ can be decomposed
as $[Y,Y]_t=\sum_{s\leq t} (\triangle Y_s)^2  + [Y,Y]^c_t,$ where $\triangle Y_s$ is the jump of $Y$ at $s$, and
$[Y,Y]^c$ is the continuous component of $[Y,Y].$

Recall that a process $Y$ is a supermartingale if $E[Y_t-Y_s| {\cal F}_s ] \leq 0$ for $0\leq s \leq t.$   It
means that the process has a downward drift.   The following condition is stronger than the supermartingale condition,
requiring that the downward drift be at least as strong as a constant $\gamma$ times the rate of variation of the process,
as measured by the quadratic variation process. 

\begin{cond}  \label{cond.drift_down}
The process $Y$ is a semimartingale with $Y_0=0$, and $\gamma\geq 0$ such that $Y+\gamma [Y,Y]$ is a supermartingale.
\end{cond}

Condition \ref{cond.drift_down} is a joint condition on $Y$ and $[Y,Y]$, and is preserved under
continuous, adapted time changes.  A stronger condition, involving separate constraints on $Y$ and $[Y,Y]$, is the
following.
\begin{cond}  \label{cond.timed_drift_down}
$Y$ is a random process with $Y_0=0$, and $\gamma >0$,  $\mu>  0$ and $\sigma^2\geq 0$  are such that $\gamma=\frac{\mu}{\sigma^2}$
and  $(Y_t +\mu t : t \geq 0) $ and $([Y,Y] - \sigma^2 t : t\geq 0)$ are supermartingales.
\end{cond}
Condition \ref{cond.timed_drift_down} implies Condition \ref{cond.drift_down}
because, under Condition \ref{cond.timed_drift_down}, \\
$Y_t  + \gamma [Y,Y]_t=(Y_t+\mu t) + \gamma([Y,Y]_t-\sigma^2 t).$
Let $Y^*=\sup\{ Y_t : t\geq 0\}.$

\begin{proposition} \label{prop.drift_down}
Suppose $Y$ and $\gamma > 0$ satisfy Condition \ref{cond.drift_down} (or Condition \ref{cond.timed_drift_down})
and $a\geq 0.$  \\
(a)  The following holds:
\begin{equation}  \label{eq.mg_bound_down}
P\{  Y^*  \geq a \}   \leq \frac{1}{1+\gamma a}.
\end{equation}
(b)  Equality holds in \eqref{eq.mg_bound_down} if and only if the following is true, with
$T=\inf \{t \geq 0: Y_t \geq a\}:$
$(Y_{t\wedge T}: t\geq 0)$ has no continuous martingale component,  $Y$ is sample-continuous over
$[0,T)$ with probability one, $P(Y_T=a | T < \infty)=1$, and $(Y + \gamma [Y,Y])_{t\wedge T}$ is a martingale.
\end{proposition}

Proposition \ref{prop.drift_down} raises the question of whether there is a finite upper bound on $E[Y^*]$ depending only
on $\gamma.$   Also, the necessary conditions in Proposition \ref{prop.drift_down}(b)
cannot be satisfied for two distinct strictly positive values of $a.$    This raises the question as to how close to equality
the bound \eqref{eq.mg_bound_down} can be for all values of $a$, for a single choice of $Y$ not depending on $a$.
The following proposition addresses these two questions.

\begin{proposition}  \label{prop.uniform}
Given $\gamma \geq 0$ there exists $Y$ satisfying Condition \ref{cond.timed_drift_down} such that
\begin{equation} \label{eq.5bound}
P\{Y^* \geq a \}  \geq \frac{1}{5(1+a\gamma)}
\end{equation}
for all $a\geq 0.$   In particular, $E[Y^*]=+\infty$ for this choice of $Y.$
\end{proposition}

The remainder of this paper is organized as follows.   Section \ref{sec.construction} describes a
construction, showing there is a process meeting the conditions of Proposition \ref{prop.drift_down}(b),
and providing a proof of Proposition \ref{prop.uniform}.   Proposition  \ref{prop.drift_down}
is reformulated and proved in Section \ref{sec.dynamic_programming}.   A discrete time version of
Proposition \ref{prop.drift_down} is stated and proved in Section \ref{sec.discrete_time}.  Discussion
about intuition and possible extensions is given in Section \ref{sec.discussion}.  Some well known
related inequalities are discussed in Section \ref{sec.related}.

\section{The big jump construction}   \label{sec.construction}

A construction is given for a process meeting the bound of Proposition \ref{prop.drift_down} with
equality,   and for a  process providing  a proof  of Proposition \ref{prop.uniform}.  These processes
satisfy Condition \ref{cond.timed_drift_down}.

Let $\mu >0$, $\sigma^2 >0,$ and let $h: \IR_+ \rightarrow \IR_+$ be such that $h$ is nondecreasing.  
Consider a  random process $Y$ of the form:
$$
Y_t = \left\{   \begin{array}{cl}   -y(t)  &  t < T   \\
                                                  h(y(T)) - \mu (t -T)  &  t \geq T 
                                                  \end{array} \right.
$$
for a deterministic, continuous function $y=(y(t) : t\geq 0)$ and random variable $T$ described below.
Note that if $T <+\infty$ then $Y_{T-}=-y(T)$, $Y_T=h(y(T)),$ and $\triangle Y_T=y(T)+ h(y(T)).$  
Let $y$ be a  solution to the differential equation
$$
\dot{y}= \mu + \frac  {\sigma^2}{y+h(y)},  ~~~~ y(0)=0
$$
 and let $T$ be an extended nonnegative random variable such that for all $t\geq 0,$
$$
P\{ T\geq t  \} = \exp\left( -  \int_0^t \kappa (y_s) ds \right) ~~~\mbox{with}~~\kappa(y)  = \frac{\sigma^2}{(y+h(y))^2}.
$$
The function $\kappa(y(t))$ is the failure rate function of $T$: $P( T\leq t+\eta | T\geq t) = \kappa(y(t))\eta + o(\eta).$
The function $\kappa$ was chosen so that
\begin{eqnarray*}
E[ (Y_{t+\eta} - Y_t)^2  | T>t ]    & = &  (y(t)+h(y(t) ))^2\kappa (y(t))\eta + o(\eta)   \\
& = &   \sigma^2\eta + o(\eta) 
\end{eqnarray*}
and the differential equation for $y$ was chosen so that
\begin{eqnarray*}
E[Y_{t+\eta} - Y_t | T>t ]   & = &   - \dot{y}(t)\eta   +   (y(t)+h(y(t))\kappa(y(t) ) \eta   + o(\eta)  \\
& = &  -\mu \eta +  o(\eta)
\end{eqnarray*}
Therefore, $Y$ satisfies Condition \ref{cond.timed_drift_down}.

If the function $h$ is strictly increasing, then for any $c \geq 0$, a change of variable of integration
from $t$ to $y$ yields:
\begin{eqnarray}
P\{ Y^* \geq  h(c)  \}  & = &   P\{ T  \geq y^{-1}(c) \} - P\{ T= \infty \}  \nonumber  \\
& = &  \exp( -I(c) )  - \exp( -I(\infty ) )   \label{eq.Idef}
\end{eqnarray}
where
\begin{eqnarray*}
I(c) & = & \int_0^{y^{-1}(c)}  \kappa ( y(t)) dt     \\
& = &  \int _0^c    \kappa(y) \left\{ \mu + \frac{\sigma^2}{y+h(y)} \right\}^{-1} dy  
\end{eqnarray*}

\paragraph{Example 1: Meeting Proposition \ref{prop.drift_down} with equality.}
Take  $h(y)\equiv a$ for some $a > 0.$    We don't use \eqref{eq.Idef} because
$h$ is not strictly increasing, but similar reasoning yields:
\begin{eqnarray*}
P\{Y^*\geq a\}  &=  &  1 - P\{ T=\infty\}  \\
& = &   1 -   \exp\left(  - \int_0^\infty  \kappa(y(t))   dt    \right)  \\
& = &   1 -   \exp\left(  - \int_0^\infty  \kappa(y)   \left\{  \mu + \frac  {\sigma^2}{y+a}  \right\}^{-1}  dy  \right)  \\
& = & 1 - \exp \left( - \int_0^{\infty}  \left\{      \frac{1}{y+a} - \frac{\mu}{y\mu +a\mu+\sigma^2}   \right\} dy  \right) \\
& =  &  \frac{1}{1+ \frac{a\mu}{\sigma^2}}
\end{eqnarray*}
Thus, the process $Y$ satisfies the bound of Proposition \ref{prop.drift_down} with equality for $\gamma=\frac{\mu}{\sigma^2}.$\\

\paragraph{Example 2: Proof of Proposition \ref{prop.uniform}.}   Take $ h(y)=b+y$ for some $b > 0.$ 
Equation \eqref{eq.Idef} yields that for $b\geq 0$,
$$P\{Y^* \geq b+c\}  = \exp(-I(c)) -\exp( -I(\infty))$$
where
\begin{eqnarray*}
I(c) & = &    \int _0^c    \frac{\sigma^2}{(b+2y)^2}  \left\{ \mu + \frac{\sigma^2}{b + 2y} \right\}^{-1} dy  \\
& = &  \int _0^c  \left\{  \frac{1}{b+2y} - \frac{\mu}{\mu(b+2y) + \sigma^2} \right\} dy  \\
& = & \frac{1}{2}\ln\left(  \frac{b+2c}{\mu(b+2c)+\sigma^2}\right) - \frac{1}{2}\ln\left( \frac{b}{\mu b + \sigma^2 }\right)
\end{eqnarray*}
Using this and \eqref{eq.Idef}, and setting $c = a - b,$  yields
\begin{equation}
P\{ Y^* \geq a \} = \left\{  \begin{array}{cl}  \left(  \frac{\mu b}{\mu b + \sigma^2}\right)^{\frac{1}{2}}
\left\{  \left( 1 + \frac{\sigma^2}{\mu(2a-b)}\right)^{\frac{1}{2}} -1 \right\}  &  a\geq b    \\
1 - \left(  \frac{\mu b }{\mu b + \sigma^2 }\right)^\frac{1}{2}  &  0 < a  \leq b
 \end{array} \right.
\end{equation}
Let $b=\frac{16 \sigma^2}{9\mu}.$   Then $P\{Y^*\geq a\} =  \frac{1}{5} \geq   \frac{1}{5(1+\frac{\mu a}{\sigma^2} ) }$ for $0 \leq  a \leq b.$
By checking derivatives, it is easy to verify that  $(1+ \frac{\alpha}{2})^\frac{1}{2} -1 \geq \frac{\alpha}{4(1+\alpha)}$ for any $\alpha > 0$.
Therefore, for this choice of $b$, and $a \geq b$,
\begin{eqnarray*}
P\{ Y^* \geq a \} & \geq   & \frac{4}{5}
\left\{  \left( 1 + \frac{\sigma^2}{2\mu a }\right)^{\frac{1}{2}} -1 \right\}  \\
& \geq & \frac{1}{5(1+\frac{\mu a}{\sigma^2} ) }.
\end{eqnarray*}
This bound for the process $Y$ proves Proposition \ref{prop.uniform}.

\section{Reformulation and proof of Proposition \ref{prop.drift_down}}  \label{sec.dynamic_programming}

Proposition  \ref{prop.drift_down} concerns the probability that a process with downward drift, starting from $0$,
reaches level $a.$    It is more convenient for the proof, to consider the equivalent problem, of the probability a process
with upward drift, starting from $a$, reaches zero.  The correspondence between the two formulations is obtained
by setting $X=a-Y,$ so the proof of Proposition \ref{prop.mgbndZ}  below also establishes Proposition \ref{prop.drift_down}.

\begin{cond}   \label{cond.drift_up}
$X$ is a semimartingale and  $\gamma \geq 0,$ such that  $X - \gamma [X,X]$  is a submartingale.
\end{cond}

\begin{proposition} \label{prop.mgbndZ}  (Equivalent to Proposition \ref{prop.drift_down}.)
Suppose $X$ and $\gamma$ satisfy Condition \ref{cond.drift_up} and $X_0=a$ for some $a\geq 0.$
Let $T=\inf\{ t : X_t \leq 0\}$  (so $T=\infty$ if $X_t > 0$ for all $T$).  \\
(a) The following holds:
\begin{equation}   \label{eq.mg_bound_up}
P\{ T < \infty \} \leq  \frac{1}{1+\gamma a}.
\end{equation}
(b) Equality holds in \eqref{eq.mg_bound_up} if and only if
$(X_{t\wedge T}: t\geq 0)$ has no continuous martingale component,   $X$ is sample-continuous over
$[0,T)$ with probability one, $P\{X_T=0| T < \infty\}=1$, and $\left( (X -  \gamma [X,X])_{t\wedge T}: t \geq 0\right)$ is a martingale.
\end{proposition}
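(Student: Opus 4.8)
The plan for proving Proposition~\ref{prop.mgbndZ} (and hence, via the substitution $X=a-Y$, Proposition~\ref{prop.drift_down}) is as follows. The case $\gamma=0$ is trivial, so I would assume $\gamma>0$ and then, replacing $X$ by $\gamma X$ --- which replaces $[X,X]$ by $\gamma^{2}[X,X]$, turns Condition~\ref{cond.drift_up} into its $\gamma=1$ instance, and changes neither $T$ nor the target bound --- reduce to $\gamma=1$. In the spirit of dynamic programming, with $V(x)$ the guessed ``largest probability of reaching $0$ from $x$ over all dynamics allowed by Condition~\ref{cond.drift_up}'', I would take
\[
V(x)=\begin{cases}\dfrac{1}{1+x}, & x\ge 0,\\[2mm] 1, & x<0.\end{cases}
\]
Then $0\le V\le 1$, $V(x)\ge\mathbf{1}_{\{x\le 0\}}$ for every $x$, and $V(X_{0})=V(a)=\tfrac{1}{1+a}$. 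Everything in part~(a) then reduces to the single claim that the stopped process $\bigl(V(X_{t\wedge T}):t\ge 0\bigr)$ is a supermartingale: granting it, $E[V(X_{t\wedge T})]\le V(a)$ for all $t$; on $\{T<\infty\}$ the cadlag path satisfies $X_{T}\le 0$, so $V(X_{t\wedge T})=V(X_{T})=1$ for $t\ge T$; and Fatou's lemma then gives $P\{T<\infty\}\le\liminf_{t}E[V(X_{t\wedge T})]\le\tfrac{1}{1+a}$, which is~(a).

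To prove the claim I would apply a change-of-variables formula to $V(X_{t\wedge T})$. Since $V$ is $C^{2}$ on $\IR\setminus\{0\}$ but has a concave corner at the origin ($V'(0-)=0$, $V'(0+)=-1$), the Itô--Tanaka--Meyer formula is the right tool, and the corner contributes a term $-\tfrac{1}{2}L^{0}_{t\wedge T}\le 0$, where $L^{0}$ is the local time of $X$ at $0$. For the rest I would use Condition~\ref{cond.drift_up}: writing $X-[X,X]=M'+I'$ with $M'$ a local martingale and $I'$ predictable nondecreasing, split
\[
\int_{0}^{t\wedge T}V'(X_{s-})\,dX_{s}=\int_{0}^{t\wedge T}V'(X_{s-})\,dM'_{s}+\int_{0}^{t\wedge T}V'(X_{s-})\,dI'_{s}+\int_{0}^{t\wedge T}V'(X_{s-})\,d[X,X]_{s}.
\]
The first integral is a local martingale (as $V'$ is bounded); the second is nonincreasing (as $V'\le 0$ and $I'$ is nondecreasing). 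Recombining the continuous part $d[X^{c},X^{c}]$ of the last integral with the second-order term of Itô's formula produces the integrand $\tfrac{1}{2}V''(x)+V'(x)=\tfrac{-x}{(1+x)^{3}}\le 0$, evaluated where $X_{s-}$ lives before $T$, namely on $\{x>0\}$; and the jump terms combine into $\sum_{0<s\le t\wedge T}\psi_{X_{s-}}(\triangle X_{s})$ with $\psi_{x}(z)=V(x+z)-V(x)-V'(x)z+V'(x)z^{2}$. The two cases to verify are: for $x\ge 0$ and $x+z>0$,
\[
\psi_{x}(z)=\frac{-z^{2}(x+z)}{(1+x)^{2}(1+x+z)}\le 0,
\]
and, for the single jump at time $T$ (from $x=X_{T-}\ge 0$ down to $x+z=X_{T}\le 0$), $z\mapsto\psi_{x}(z)$ is a downward parabola with vertex at $z=\tfrac{1}{2}>0$, hence increasing on $\{z\le -x\le 0\}$ with maximum value $\psi_{x}(-x)=0$. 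Collecting everything, $V(X_{\cdot\wedge T})$ is $V(a)$ plus a local martingale plus a nonincreasing process, and, being bounded, it is a genuine supermartingale --- which proves~(a).

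For part~(b) I would observe that equality in~(a) forces each inequality in the preceding argument to hold with equality almost surely and forces the Fatou step to be tight, and then read off the conditions. The term $\int V'(X_{s-})\,dI'_{s}$ vanishes iff $I'$ does not grow on $[0,T)$, i.e.\ $(X-[X,X])_{\cdot\wedge T}$ is a martingale; the $d[X^{c},X^{c}]$-term vanishes iff $[X^{c},X^{c}]$ does not grow on $[0,T)$, i.e.\ $X_{\cdot\wedge T}$ has no continuous martingale component; the jump sum vanishes iff $X$ has no jump on $[0,T)$ (sample-continuity there) and the jump at $T$, if any, satisfies $\psi_{X_{T-}}(\triangle X_{T})=0$, which by the strict monotonicity of the parabola forces $X_{T}=0$; and the corner term $-\tfrac{1}{2}L^{0}$ is then automatically zero, since a process continuous and of finite variation on $[0,T)$ accumulates no local time at $0$ there. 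Conversely, under all of these $V(X_{\cdot\wedge T})$ is a bounded martingale, so $E[V(X_{t\wedge T})]=\tfrac{1}{1+a}$ for all $t$; letting $t\to\infty$, and checking that on $\{T=\infty\}$ the bounded martingale $V(X_{t})$ must converge to $0$, gives $P\{T<\infty\}=\tfrac{1}{1+a}$, completing~(b).

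The step I expect to be the main obstacle is the pointwise jump estimate $\psi_{x}(z)\le 0$, especially for jumps that overshoot strictly below $0$ at time $T$: a naive $C^{2}$ extension of $\tfrac{1}{1+x}$ past the origin makes $\psi_{x}(z)$ positive for such $z$, so it is precisely the choice of $V$ with a concave corner at $0$, whose local-time contribution has the favourable sign, that renders the overshoot jumps harmless. The remaining points --- justifying the generalized Itô formula for this $V$, the absolute convergence of the jump sum (via $|\psi_{x}(z)|\lesssim z^{2}$), the localization needed to upgrade ``local martingale plus nonincreasing process'' to an honest supermartingale, and the limiting arguments in~(b) --- I would treat as routine.
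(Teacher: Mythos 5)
Your proposal is correct in its essentials and rests on the same core idea as the paper's proof: compose the process with the value function $p(x)=1/(1+\gamma x)$, apply a change-of-variables formula, and absorb the second-order and jump corrections into the integrator $X-\gamma[X,X]$, whose submartingale property makes the composed process a supermartingale. Your identities $\tfrac12 V''+V'=p^2(p-1)\le 0$ and $\psi_x(z)=-z^2(x+z)/[(1+x)^2(1+x+z)]\le 0$ for $x+z>0$ are exactly the paper's inequalities \eqref{eq.ptriangle}--\eqref{eq.pdoubleprime} in the normalization $\gamma=1$. Where you genuinely diverge is in the treatment of the overshoot $X_T<0$. The paper preprocesses the \emph{process}: it replaces $X$ by $\tilde{X}_t=\max\{X_{t\wedge T},0\}$ and checks via \eqref{eq.compare} that Condition \ref{cond.drift_up} survives the truncation, after which the classical $C^2$ change of variables on $[0,\infty)$ suffices. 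You instead postprocess the \emph{value function}: you extend it by the constant $1$ on $(-\infty,0)$, pay for the concave corner with a local-time term $-\tfrac12 L^0$ of favourable sign, and verify directly that an overshooting jump contributes $\psi_{X_{T-}}(\triangle X_T)\le 0$. Your device avoids having to re-verify the hypothesis for a modified process (the paper's computation \eqref{eq.compare}), at the price of invoking the It\^{o}--Tanaka--Meyer formula; the paper's keeps everything within the smooth It\^{o} formula at the price of the truncation lemma. Both are sound, and your reading of the equality conditions in (b) from the vanishing of each nonpositive term matches the paper's.

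One caveat on part (b): your converse step asserts that on $\{T=\infty\}$ the bounded martingale $V(X_t)$ converges to $0$. This does not follow from the four listed conditions alone --- the constant process $X\equiv a$ satisfies all of them while $P\{T<\infty\}=0$ --- so some further argument (e.g.\ that $[X,X]_\infty=\infty$ or $X_t\to\infty$ on $\{T=\infty\}$) is needed to close the ``if'' direction. The paper's own proof of (b) is silent on the same passage, so this is a shared gap rather than a defect of your route, but it should not be filed under ``routine.''
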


\begin{proof}
{\em (Proof of (a))}
Suppose $X$ and $\gamma$ satisfy Condition \ref{cond.drift_up} and $X_0=a$ for some $a\geq 0.$
Let $\tilde{X}_t= \max\{ X_{t\wedge T}  , 0 \}.$    Let $D=-X_T$ on the event $T < \infty$, and let $D=0$ otherwise.
Note that $X_t=\tilde{X}_t$ and  $[\tilde{X},\tilde{X}]_t  =  [X,X]_t $ for $t\in [0,T),$ 
while, if $\{T< \infty\}$, 
 $0=\tilde{X}_T \geq X_T = -D $   and    $\triangle[\tilde{X},\tilde{X}]_T =   (X_{T-})^2 \leq  (D + X_{T-})^2  =\triangle[X,X]_T.$
Thus,
\begin{equation}  \label{eq.compare}
(\tilde{X}-\gamma [\tilde{X},\tilde{X}])_t - (X - \gamma [X,X])_{t\wedge T} = \left\{ \begin{array}{cl}  0 & 0\leq t < T \\
D + \gamma(  2DX_{T-}+D^2  )  \geq 0 &  t \geq T
\end{array} \right.
\end{equation}
Since $X-\gamma [X,X]$ is a submartingale, so is $(X-\gamma [X,X])_{t\wedge T},$ and therefore so is $\tilde{X}-\gamma [\tilde{X},\tilde{X}].$   
Therefore, $\tilde{X}$ and $\gamma$ also satisfy the conditions of the proposition, and $T=\inf\{t\geq 0 : \tilde{X} \leq 0\}.$     Therefore,
to prove (a), it suffices to prove (a) with $X$ replaced by $\tilde{X}.$  Equivalently, in proving (a), we can, and do, make
the assumption, without loss of generality, that $X_t \equiv 0$ for $t\geq T.$

Let $p(x)=\frac{1}{1+\gamma x}$ for $x \geq 0.$
 Let $0\leq s < t.$   By the Dol\'{e}an-Dade Meyer change of variables formula for semimartingales,
\begin{equation}  \label{eq.COV}
 \begin{split}
p(X_t)  & =  p(X_s) + \int_s^t p'(X_{u-})dX_u  \\
&  +  \sum_{s<u\leq t} \left( p(X_u)-p(X_{u-})-p'(X_{u-})\triangle X_u\right)   +
 \frac{1}{2}\int_s^t p''(X_u)d[X,X]_u^c  .
\end{split}
\end{equation}
Observe that
\begin{eqnarray}  \label{eq.pprime}
p'(X_{u-})= - \gamma  p^2(X_{u-})
\end{eqnarray}
\begin{eqnarray}   
p(X_u)-p(X_{u-})-p'(X_{u-})\triangle X_u & = & (\triangle X_u)^2\gamma^2p^2(X_{u-})p(X_u)  \nonumber \\
& \leq & (\triangle X_u)^2  \gamma^2 p^2(X_{u-})    \label{eq.ptriangle}
\end{eqnarray}
and
\begin{eqnarray}
p''(X_{u-})& = &2\gamma^2p^3(X_{u-})  \nonumber  \\
& \leq &  2\gamma^2 p^2 (X_{u-}) .  \label{eq.pdoubleprime}
\end{eqnarray}

Combining \eqref{eq.COV} - \eqref{eq.pdoubleprime} and the fact $[X,X]_u = [X,X]^c_u  + \sum_{v \leq u} (\triangle X_v)^2$ yields
\begin{equation}
p(X_t) \leq p(X_s) - \gamma  \int_s^t p(X_{u-})^2 dG_u
\end{equation}
where   $G  = X -  \gamma  [X,X] .$
By assumption, $G$ is a submartingale, so
\begin{equation}   \label{eq.Ginequal}
E\left[   \int_s^t p(X_{u-})^2 dG_u  \bigg|  {\cal F}_s \right] \geq 0
\end{equation}
Therefore, $p(X)$ is a supermartingale, so that $E[p(X_t)] \leq p(X_0) = p(a). $  For $t\geq 0,$
$\{T\leq t\} = \{ p(X_t)=1\}.$ 
Therefore, $P \{ T\leq t\}  \leq E[p(X_t)] \leq  p(a) $ for all $t$.  
Thus, $P\{T< \infty\} = \lim_{t\rightarrow \infty} P\{T\leq t\} \leq p(a)$, completing the proof of part (a).

{\em (Proof of (b))}   To begin, assume that $X_t \equiv 0$ for $t \geq T,$ with probability
one.   Note  that there are three places in the proof of  part (a) where the bounds
might not hold with equality.   These are associated with inequalities  \eqref{eq.ptriangle}, 
\eqref{eq.pdoubleprime}, and \eqref{eq.Ginequal}.
First,  if $\triangle X_s \neq 0$, then  \eqref{eq.ptriangle} holds with equality if and only if  $X_s \neq 0$.
Therefore, in order for \eqref{eq.mg_bound_up} to be tight, with probability one, any jumps of $X$
happening in $[0,T]$ must happen at time $T.$    Second, the inequality  \eqref{eq.pdoubleprime}
for $p''(X_{u-})$ is strict for any $u < T$.   This inequality is applied to the
last term in \eqref{eq.COV}.   Therefore, the resulting upper bound on the last term in \eqref{eq.COV} holds with
equality if and only if $[X,X]^c \equiv 0$ over the interval $(s,t].$    Finally, \eqref{eq.Ginequal} holds
with equality if and only if $G$ is a martingale (not just a supermartingale) over $(s,t].$   
These observations prove (b) under the additional assumption that $X_t \equiv 0$ for $t \geq T.$

To prove (b) in general, suppose $X$ satisfies the conditions of the proposition, and let $\tilde{X}$
be defined in part (a).   The conditions in part (b) involve $X$ only up to time $T.$   The process $X$ and 
$\tilde{X}$ are equal for $t<T$, so the only difference between them over $[0,T]$
could possibly be at time $T$.      If   \eqref{eq.mg_bound_up} holds with equality, then by the special case
of part (b) already proved, $\tilde{X}$ must satisfy the conditions in (b).   In particular,
$(X-\gamma [X,X])_{t\wedge T}$ is a martingale.   In view of \eqref{eq.compare},
$(X-\gamma [X,X])_{t\wedge T}$ must also be a martingale and
$P\{ D=0\}=1$, or equivalently, $P\{X_T=0 | T<\infty\}=1.$   Therefore, $X$ also satisfies
the conditions in (b), and the proof of the proposition is complete.
\end{proof}

The above proof is inspired by dynamic programming.   The problem of finding $X$ to maximize $P\{ T < \infty \}$ subject
to the given constraints can be viewed as a stochastic control problem.   Intuitively speaking, given $X_s=x$ for
some time $s$ and $x>0$, the conditional distribution of the increment $X_{s+\eta}-X_s$ must be selected subject to first
and second  moment constraints.   The function $p$ plays the role of  the value function in dynamic programming.

\section{Discrete time processes}   \label{sec.discrete_time}

Suppose $(\Omega, {\cal F}, P)$ is a complete probability space with
a filtration of $\sigma$-algebras $({\cal F}_k : k\in \IZ_+)$.

\begin{cond}  \label{cond.drift_down_discrete}
$S=(S_k: k\in \IZ_+)$  is an adapted random process with $S_0=0$ and, with $U_j=S_{j}-S_{j-1}$  for $j\geq 1$,
$(S_k  +  \gamma \sum_{1 \leq j \leq k} U_j^2: k\geq 0)$ is a supermartingale.
\end{cond}

Let $S^*=\sup\{ S_k: k\in \IZ_+\}.$

\begin{proposition} \label{prop.drift_down_discrete}
Suppose $S$ and $\gamma \geq 0$ satisfy Condition \ref{cond.drift_down_discrete} and $a\geq 0.$  \\
(a)  The following holds:
\begin{equation}  \label{eq.mg_bound_down_discrete}
P\{  S^*  \geq a \}   \leq \frac{1}{1+\gamma a}
\end{equation}
(b)  For any $\gamma\geq 0, a\geq 0$ and $\epsilon > 0$, there is a process $S$ satisfying
Condition \ref{cond.drift_down_discrete} such that  $P\{  S^*  \geq a \}   \geq \frac{1}{1+\gamma a} - \epsilon.$
\end{proposition}

\begin{proof}
{\em (Proof of (a))}   The filtration $({\cal F}_k : k\in \IZ_+)$ can be extended to a filtration $({\cal F}_t : t\in \IR_+)$ by letting ${\cal F}_t={\cal F}_{\lfloor t \rfloor}$
for $t\in \IR_+$, and the process $S$ can be extended to a piecewise constant process $(Y_t : t\in \IR_+)$ by letting $Y_t=S_{\lfloor t \rfloor}$
for $t\in \IR_+.$   Then $S^*=Y^*$ and $Y$ satisfies Condition \ref{cond.drift_down}.   Thus, by
Proposition  \ref{prop.drift_down},   $P\{S^*\leq a\}=P\{Y^*\leq a\} \leq  \frac{1}{1+\gamma a}.$  This establishes (a).

{\em (Proof of (b))}   If $\gamma = 0$, the process $S$ can be a mean zero random walk with finite variance jumps, and then
$S^*=\infty$ with probability one, so that equality holds in  \eqref{eq.mg_bound_down_discrete} for such choices of $S.$  
So assume for the remainder of the proof that $\gamma > 0.$
Given $\tilde{\mu} > 0,$   let $\tilde{\sigma}^2=\frac{\tilde{\mu}}{\gamma}-\tilde{\mu}^2.$   Assume that $\tilde{\mu}$ is so small that
$\tilde{\sigma}^2 > 0.$     Let $\tilde{a}=a+ \tilde{\mu}.$   Let $Y$ be a continuous time martingale as constructed in Example 1,
for the parameters $\tilde{\mu}, \tilde{\sigma}^2,$ and $\tilde{a}.$     Thus, $Y$ follows a deterministic trajectory up to some random
time $T$, which could be infinite.   If $T < \infty$, then $Y$ jumps up to $\tilde{a}$ at time $T$.    Furthermore, modify $Y$ on the event
$\{T < \infty\}$ by letting $Y_t =\tilde{a}-\tilde{\mu}(t-T)$ for $t\geq T.$   Then $Y_t-\tilde{\mu}t$ and $[Y,Y] -\tilde{\sigma}^2(t\wedge T)$ are martingales,
and $P\{Y^*\geq \tilde{a} \}=\frac{1}{1+\tilde{\gamma}\tilde{a}},$  
 where $\tilde{\gamma}=\frac{\tilde{\mu}}{\tilde{\sigma}^2}.$   Let $M$ denote the martingale defined by $M_t=Y_t - \tilde{\mu} t.$   Since $M$ differs
 from $Y$ by a continuous function with finite variation, $[M,M] \equiv [Y,Y]$ with probability one.  Since $M$ is a martingale with finite second moments,
 $M^2-[M.M]$ is also a martingale.
 
 Let $S=(S_k : k \in \IZ_+)$ be obtained by sampling $Y$ at nonnegative integer times, namely,  $S_k=Y_k$   for $k\in \IZ_+.$
We first check that $S$ satisfies Condition \ref{cond.drift_down_discrete}. 
For $k\geq 1,$  set $U_k=S_k - S_{k-1}=Y_{k}-Y_{k-1}.$  Then
$E[U_k | {\cal F}_{k-1}] = E[Y_{k}-Y_{k-1} | {\cal F}_{k-1}]  = \tilde{\mu},$
and
\begin{eqnarray*}
E[U_k^2  | {\cal F}_{k-1}]  & = & E[(\tilde{\mu}+M_k-M_{k-1})^2  | {\cal F}_{k-1}]  \\
& = &  \tilde{\mu}^2 +    E[(M_k-M_{k-1})^2  | {\cal F}_{k-1}]   \\
& = & \tilde{\mu}^2 +   E[ [M,M]_{k} - [M,M]_{k-1}  | {\cal F}_{k-1}]  \\
& = &  \tilde{\mu}^2 +   E[ [X,X]_{k} - [X,X]_{k-1}  | {\cal F}_{k-1}]  \leq  \tilde{\mu}^2  + \tilde{\sigma}^2
\end{eqnarray*}
Therefore, by the choice of $\tilde{\sigma}^2,$
$$
E[ U_k - \gamma U_k^2  | {\cal F}_{k-1}] \geq  \tilde{\mu} -\gamma ( \tilde{\mu}^2  + \tilde{\sigma}^2 ) = 0
$$
Thus, $S$ satisfies Condition  \ref{cond.drift_down_discrete} as required.
On the event $\{T < \infty\}$,  $Y_T=\tilde{a}=a+\mu$ and $Y$ decreases with constant slope $-\tilde{\mu}$ after time $T$, 
implying that  $S_{\lceil T \rceil} = Y_{\lceil T \rceil} \geq a$, so that $S^* \geq a$.
Therefore
\begin{equation}   \label{eq.SY}
P[S^* \geq a] \geq P[Y^* \geq \tilde{a} ] =\frac{1}{1+\tilde{\gamma}\tilde{a}} = \frac{1}{1+\left[\frac{\gamma}{1+\tilde{\mu}\gamma}\right][\tilde{\mu}+a ]}
\end{equation}
For $\tilde{\mu}$ sufficiently small, \eqref{eq.SY} implies the inequality of part (b), and the proof is complete.
\end{proof}

\section{Discussion}   \label{sec.discussion}

There is a large literature on bounds on processes implied by drift conditions and
concentration inequalities.  Typically the constraints placed on the sizes of the increments of the processes
are stronger than those imposed here.    For example, in the discrete-time case,  Condition \ref{cond.drift_down_discrete}
constrains the second moment of the increments in terms of the drift, but stronger constraints
such as bounded increments, or uniformly exponentially dominated increments \cite{hajek82drift}, yield stronger bounds.
The motivation of this paper is to explore the implication of a relatively mild condition on the
variation of a process with negative drift.  A second motivation of this paper is that the bound represents
the solution of an interesting optimal stochastic control problem that may arise in some applications such as finance.

The proof of Proposition \ref{prop.drift_down} might leave the reader wanting some additional insight
into why the big jump processes are the ones meeting the bound with equality.    As noted in Section \ref{sec.dynamic_programming},
it is useful to view the bound $p(a)=\frac{1}{1+  \gamma a}$ as a value function in the sense of dynamic programming.
Given its explicit form, it is trivial to verify that it is decreasing, convex, and the third derivative is negative.  However,
one might expect these properties even without knowing the function explicitly.   The fact it is decreasing means that the chances
of $X$ ever reaching zero decrease with the initial state $a$.   Due to Jensen's inequality, the fact the function is convex means that the chances
of ever reaching zero from an initial state $a$ would increase if, before imposing the constraints on $X$,  the process could
follow a martingale trajectory.      Finally, the fact that the third derivative is negative means that the second derivative is decreasing. 
Therefore, it is better to use the variations of $X$ at smaller values. In this connection, it is useful to remember that
martingales, even discontinuous ones, can be obtained from time changes of brownian motion.  That explains why the process,
for initial state $a$, has only two possible values at future times, and why there are no upward jumps in the process.   

We close by speculating about two possible extensions.
Proposition \ref{prop.drift_down} shows that, for continuous-time processes, even though
Condition  \ref{cond.timed_drift_down} is more restrictive than Condition  \ref{cond.drift_down},
the same maximum value of $P\{ Y^* \geq a\}$ can be achieved.   However, the situation is different
in discrete-time.  The discrete-time version of Condition  \ref{cond.timed_drift_down} would constrain the
process $Y$ to take downward jumps that are not vanishingly small, and the maximum possible value of
$P\{ Y^* \geq a\}$ would be strictly smaller than the bound in Proposition  \ref{prop.drift_down_discrete}.
The intuition in the first paragraph above would indicate that
in such a case, the optimal process would still be one following a deterministic path, with
the possible exception of a single big jump, reaching the target value in one step.

Proposition \ref{prop.uniform} shows that under Condition  \ref{cond.drift_down} it is possible
that $E[Y^*] = +\infty.$ However, it might be interesting to consider maximizing the expected value of sublinear
functions of $Y$, such as $E[(Y^*)^\alpha]$ for a fixed $\alpha$ with $0< \alpha < 1.$   The process
of Proposition \ref{prop.uniform} achieves at least one-fifth of the maximum possible value.
Perhaps the big jump construction of Section \ref{sec.construction} still yields the extremal processes,
for a suitable choice of the function $h.$

\section{Related inequalities}  \label{sec.related}

\subsection{Kingman's bound}

If $Y$  has stationary, independent increments (SII) and $Y$ and $\gamma$
satisfy Condition \ref{cond.timed_drift_down}, the moment upper bound of Kingman \cite{Kingman62} for
the waiting time in a $GI/G/1$ queue implies\footnote{Kingman's moment bound is for discrete time SII
processes, but it is easily extended to
continuous time by sampling the continuous time processes at times of the form $2^{-n},$ and letting $n\rightarrow\infty.$}
\begin{equation}   \label{eq.Kingman}
E[Y^*]  \leq \frac{1}{2\gamma},   ~~~~~\mbox{(under SII)}
\end{equation}
which together with Markov's inequality implies that
$$
P\{ Y^*  \geq a \}  \leq  \frac{1}{2a\gamma}. ~~~~~\mbox{(under SII)}
$$
If $Y$ is skip-free positive, a simple argument shows that $Y^*$ has an exponential distribution, so that
\begin{equation}
P\{ Y^* \geq a \}  \leq \exp(-2\gamma a)  ~~~~~\mbox{(under SII, skip-free positive)}  \label{eq.SIIsf}
\end{equation}
If $Y$ is SII and  both skip-free positive and skip-free negative, it is, of course, continuous.
If in addition both $(Y_t +\mu t : t \geq 0) $ and $([Y,Y] - \sigma^2 t : t\geq 0)$ are martingales,
$Y$ can be written as $Y=\sigma B_t - \mu t,$  where $B$ is a standard Brownian motion,  in which case
equality holds in both \eqref{eq.Kingman} and  \eqref{eq.SIIsf}.

\subsection{Doob's $L^p$ inequalities}

Doob's well known $L^p$  inequality for a nonnegative submartingale $X=(X_t: 0 \leq t \leq T)$,
holding for $p>1$, is:
\begin{equation}  \label{eq.suppbnd}
||X^*||_p   \leq  \frac{p}{p-1}  ||X_T||_p.
\end{equation}
Dubins and Gilat \cite{DubinsGilat78} gave a construction of a martingale showing that the constant
in \eqref{eq.suppbnd} is the best possible.   That martingale can be expressed as a big jump process
as follows.   Let $h$ be a positive,  nondecreasing function on
the interval $[0,1],$  let $U$ be uniformly distributed on the interval $[0,1],$    let $0<c<1,$
and let $X=(X_t: 0\leq t \leq 1)$ denote the process:
$$
X_t = \left\{  \begin{array}{cl}  h(t)  &  t < U   \\     ch(U)  & t\geq U. \end{array} \right.
$$
In words, $X$ follows $h$ until time $U$, at which time it jumps downward to a fraction $c$ of itself, and sticks.
We now determine a choice of $h$ for
which $X$ is a martingale.   The failure rate function of $U$ (or jump intensity, given the jump
hasn't yet happened) at time $t$ is $\frac{1}{1-t}.$  Therefore,  the drift of $X$ at time  $t$ is
$h'(t)-  \frac{(1-c)h(t)}{1-t}.$   Setting the drift to zero
determines $h$, up to a constant factor, to be   $h(t)=\frac{1}{(1-t)^{1-c}}.$    
Let $T=1.$   Note that $X^*=h(U)$ and $X_T=cX^*.$    Given $p  >  1$,
$X_T$ is in $L^p$ if $(1-c)p>1.$   We thus have:
$$
||X^*||_p  = \frac{1}{c} ||X_T||_p <\infty  ~~\mbox{if}~~\frac{1}{c} >  \frac{p}{p-1},
$$
which shows that the constant in \eqref{eq.suppbnd} is the best possible.   

Cox \cite{Cox84} described the dynamic programming approach for Doob-like inequalities, and
showed that Doob's $L^2$ martingale inequality can't be satisfied with equality. 

The analysis of Dubins and Gilat \cite{DubinsGilat78} is related to work of Blackwell and Dubins \cite{BlackwellDubins63},
which makes a connection to the Hardy-Littlewood maximal function \cite{HardyLittlewood30},
$h$, of a nondecreasing integrable function $g$ on $[0,1]$, defined as follows:
$$
h(t) = \frac{1}{1-t}\int_t^1 g(u) du.
$$
In fact,  $h$ is the unique function such that a process that follows $h$ up until time $U$, and then jumps and
sticks to value $g(U),$  is a martingale.
Blackwell and Dubins \cite{BlackwellDubins63} used the inequality
$$\lambda \leq  \frac{   \int_{\{X^*\geq \lambda\}} X_T  dP   }{P{\{X^*\geq \lambda\}} }
$$
to show that for a specified distribution of the end variable $X_T,$   the  Hardy-Littlewood maximal
function \cite{HardyLittlewood30}  provides the stochastically largest possible distribution for 
the supremum of a martingale.   The idea is the following.   Given the distribution of $X_T$,  one can
ask how large $P\{E\}$ can be for an event $E$ such that
$$\lambda \leq  \frac{   \int_E    X_T  dP   }    {P\{E \}} .
$$
Without loss of generality assume that the underlying probability space is $[0,1]$ and that
$X_T=g$, where $g$ is a nondecreasing function on $[0,1].$   
Clearly $E$ should be an event of the form $[\gamma, 1],$ and 
$$\max _E P(E)=1-\min\left\{t\geq 0  :  \frac{ \int_t^1 g(t) dt }{1-t}  \geq  \lambda\right\}.$$
So if $h(t)=\frac{ \int_t^1 X(t) dt}{1-t}$, $P(E)  \leq P\{ h \geq \lambda  \}$.
Thus, $P\{X^* \geq \lambda \} \leq P\{ h \geq \gamma \}.$   That is, $X^*$ is stochastically
dominated by $h.$

In contrast, the inequalities given in Proposition \ref{prop.drift_down} can be achieved with equality.  Also,
 there is no maximal probability distribution for $Y^*$ subject to Condition \ref{cond.drift_down},
 because equality cannot hold in \eqref{eq.mg_bound_down} for two distinct positive values of $a.$

\end{document}